\newtheorem{theorem}{Theorem}
\newtheorem{lemma}[theorem]{Lemma}
\newtheorem{proposition}[theorem]{Proposition}
\newtheorem{corollary}[theorem]{Corollary}
\theoremstyle{definition}
\newtheorem{notation}{Notation}
\theoremstyle{remark}
\newtheorem{remark}{Remark}
\newcommand{\dita}{Di\c{t}\v{a}}
\begin{document}


\title{A Fast Fourier Transform for Fractal Approximations}
\author{Calvin Hotchkiss}
\author{Eric S. Weber}
\address{Department of Mathematics, Iowa State University, 396 Carver Hall, Ames, IA 50011}
\email{hotchkis@iastate.edu}
\email{esweber@iastate.edu}
\subjclass[2000]{Primary: ?????; Secondary ?????}
\date{\today}
\begin{abstract}
We consider finite approximations of a fractal generated by an iterated function system of affine transformations on $\mathbb{R}^{d}$ as a discrete set of data points.  Considering a signal supported on this finite approximation, we propose a Fast (Fractal) Fourier Transform by choosing appropriately a second iterated function system to generate a set of frequencies for a collection of exponential functions supported on this finite approximation.  Since both the data points of the fractal approximation and the frequencies of the exponential functions are generated by iterated function systems, the matrix representing the Discrete Fourier Transform (DFT) satisfies certain recursion relations, which we describe in terms of \dita's construction for large Hadamard matrices.  These recursion relations allow for the DFT matrix calculation to be reduced in complexity to $O(N \log N)$, as in the case of the classical FFT.
\end{abstract}
\maketitle



\section{Introduction}

The Fast Fourier Transform (FFT) is celebrated as a significant mathematical achievement (see, for example, \cite{AuTo79}). The FFT utilizes symmetries in the matrix representation of the Discrete Fourier Transform (DFT) \cite{CT65}. For $2^N$ (equispaced) data points on [0,1), the matrix representation of the DFT is given by
 \[  \mathcal{F}_{N} = (e^{-2 \pi i \frac{j k}{2^{N}}})_{jk} \]
where $ 0 \leq j,k < 2^{N}$.  The FFT is obtained from the DFT by a permutation of the columns of $\mathcal{F}_{N}$:
\[ \mathcal{F}_{N} = (e^{-2 \pi i \frac{j \sigma(k)}{2^{N}}})_{jk} \]
for $0 \leq j,k < 2^{N}$, where
\[ \sigma(k) = \begin{cases} 2k & \quad 0 \leq k < 2^{N-1}, \\ 2k + 1 & \quad 2^{N-1} \leq k < 2^{N}. \end{cases} \]
The significance of the permutation is that the permuted matrix can be written in the following block form:
\begin{equation} \label{Eq:FFT1}
\mathcal{F}_{N} P = 
\left(
\begin{array}{rr}
\mathcal{F}_{N-1} & D \mathcal{F}_{N-1} \\
\mathcal{F}_{N-1} & -D \mathcal{F}_{N-1} 
\end{array}
\right)
\end{equation}
where $D$ is a diagonal matrix.  This block form reduces the computational complexity of the associated matrix multiplication; recursively, $\mathcal{F}_{N-1}$ can be permuted and written in block form as well.  Repeated application of the column permutation reduces the computational complexity further, and results in overall complexity $O(N \cdot 2^N)$ .

We take the view in the present paper that the DFT arises naturally in the context of iterated function systems, and the FFT arises as reordering of the iterated function system.  Indeed, consider the following set of generators:
\[ \tau_{0}(x) = \frac{x}{2}; \qquad \tau_{1}(x) = \frac{x + 1}{2}. \]
The invariant set of this IFS is the interval $[0,1]$, and the invariant measure is Lebesgue measure restricted to $[0,1]$.  Consider the approximation for the invariant set given by
\[ \mathcal{S}_{N} := \{ \tau_{j_{N-1}} \circ \tau_{j_{N-2}} \circ \cdots \circ \tau_{j_{1}} \circ \tau_{j_{0}} (0) : j_{k} \in \{0, 1\} \}. \]
This is an approximation in the sense that $[0,1] = \overline{ \cup_{N} \mathcal{S}_{N} }$, but the significance for our purposes is that $\mathcal{S}_{N}$ consists of $2^{N}$ equispaced-points:
\[ \mathcal{S}_{N} = \{ \frac{k}{2^{N}} : k \in \mathbb{Z}, \ 0 \leq k < 2^{N} \}. \]

Define a second iterated function system generated by
\[ \rho_{0}(x) = 2x; \qquad \rho_{1}(x) = 2x + 1. \]
Since these are not contractions, the IFS will not have a compact invariant set, but we consider the finite orbits of $0$ under this IFS just as before.  Define
\[ \mathcal{T}_{N} := \{ \rho_{j_N-1} \circ \rho_{j_{N-2}} \circ \cdots \circ \rho_{j_{1}} \circ \rho_{j_{0}} (0) : j_{k} \in \{0, 1\} \}. \]
Note that
\[ \mathcal{T}_{N} = \{ k : k \in \mathbb{Z}, \ 0 \leq k < 2^{N} \}. \]
With the inherited ordering on $\mathcal{S}_{N}$ and $\mathcal{T}_{N}$ from $\mathbb{R}$, say $\mathcal{S}_{N} = \{ s_{0}, s_{1}, \dots, s_{2^{N} -1} \}$ and $\mathcal{T}_{N} = \{ t_{0}, t_{1}, \dots, t_{2^{N} -1 } \}$, we obtain
\[ \mathcal{F}_{N} = ( e^{- 2 \pi i t_{j} s_{k}} )_{jk}. \]

For $0 \leq k < 2^{N}$, we write $k = \sum_{n=0}^{N-1} j_{n} 2^{n}$ with $j_{n} \in \{0,1\}$.  Then
\begin{equation} \label{Eq:reverse}
\tau_{j_{N-1}} \circ \tau_{j_{N-2}} \circ \dots \circ  \tau_{j_{0}} (0) = \dfrac{k}{2^N} = s_{k}.
\end{equation}
However,
\begin{equation} \label{Eq:preserve}
\rho_{j_{0}} \circ \rho_{j_{1}} \circ \dots \circ \rho_{j_{N-1}} (0) = k = t_{k}.
\end{equation}

We define a new ordering on $\mathcal{S}_{N}$ as follows:
\begin{equation} \label{Eq:preserve2}
\tilde{s}_{k} = \tau_{j_{0}} \circ \tau_{j_{1}} \circ \dots \circ \tau_{j_{N-1}}(0)
\end{equation}
where $k$ is written in base 2. As we shall see in Theorem \ref{Th:main1}, this new ordering on $\mathcal{S}_{N}$ results in the following matrix equality:
\begin{equation} \label{Eq:FFT2}
(e^{-2 \pi i t_{j} \tilde{s}_{k}} )_{jk} = \mathcal{F}_{N}P
\end{equation}
as in Equation (\ref{Eq:FFT1}).

We will call the compositions in Equations (\ref{Eq:preserve}) and (\ref{Eq:preserve2}) the \emph{obverse} ordering.  The composition in Equation (\ref{Eq:reverse}) will be called the \emph{reverse} ordering.  As suggested previously, and will be established in Theorem \ref{Th:main1}, if the elements of $\mathcal{S}_{N}$ and $\mathcal{T}_{N}$ are both ordered with the obverse compositions, then the permuted DFT matrix obtained is as in Equation (\ref{Eq:FFT2}).  However, if both $\mathcal{S}_{N}$ and $\mathcal{T}_{N}$ are ordered using the reverse compositions, then the matrix becomes
\[ ( e^{-2 \pi i \tilde{t}_{j} s_{k} } )_{jk} = P \mathcal{F}_{N} = 
\left(
\begin{array}{rr}
\mathcal{F}_{N-1} & \mathcal{F}_{N-1} \\
\mathcal{F}_{N-1} D & - \mathcal{F}_{N-1} D 
\end{array}
\right),
\]
a block form that will allow the inverse $\mathcal{F}_{N}^{-1}$ to have a fast multiplication algorithm.

Consider the measure $\mu_{N} = \frac{1}{2^{N}} \sum_{s \in \mathcal{S}_{N}} \delta_{s}$; this sequence of measures converges weakly to Lebesgue measure restricted to $[0,1]$, the invariant measure for the IFS generated by $\{ \tau_{0} , \tau_{1} \}$.  Moreover, we consider the exponential functions $\{ e^{2 \pi i t (\cdot)} : t \in \mathcal{T}_{N}\} \subset L^2(\mu_{N})$; this set will be an orthonormal basis, and the DFT is the matrix representation of this basis (up to a scaling factor).  Thus, the IFS generated by $\{ \tau_{0}, \tau_{1} \}$ gives rise to a fractal, and the IFS generated by $\{ \rho_{0}, \rho_{1} \}$ gives rise to the frequencies of an orthonormal set of exponentials.

A probability measure $\mu$ is \emph{spectral} if there exists a set of frequencies $\Lambda \subset \mathbb{R}$ such that $\{ e^{2 \pi i \lambda (\cdot)} : \lambda \in \Lambda \} \subset L^2(\mu)$ is an orthonormal basis.  If the measure is spectral, the set $\Lambda$ is called a spectrum for $\mu$.  Jorgensen and Pederson \cite{JP98} prove that the uniform measure supported on the middle-thirds Cantor set is not spectral.  However, they prove that the invariant measure $\mu_{4}$ for the iterated function system generated by
\[ \tau_{0}(x) = \frac{x}{4}, \qquad \tau_{1}(x) = \frac{x+2}{4} \]
is spectral, and moreover, the spectrum is obtained via the iterated function system generated by
\[ \rho_{0}(x) = 4x, \qquad \rho_{1}(x) = 4x + 1. \]
In fact, the orbit of $0$ under the iterated function system generated by $\{ \rho_{0}, \rho_{1} \}$ is a spectrum for $\mu_{4}$.

For a generic iterated function system $\{ \psi_{0}, \dots, \psi_{K-1} \}$ consisting of contractions on $\mathbb{R}^{d}$, we will consider an approximation $\mathcal{S}_{N}$ to the invariant set given by
\[ \mathcal{S}_{N} := \{ \psi_{j_{N-1}} \circ \psi_{j_{N-2}} \circ \cdots \circ \psi_{j_{1}} \circ \psi_{j_{0}} (0) : j_{k} \in \{0, 1, \dots, K-1\} \}. \]
This collection of points we will consider as the locations of data points.  We then will choose a second iterated function system $\{ \rho_{0}, \dots, \rho_{K-1} \}$, and consider the finite orbit of $0$:
\[ \mathcal{T}_{N} := \{ \rho_{j_{N-1}} \circ \rho_{j_{N-2}} \circ \cdots \circ \rho_{j_{1}} \circ \rho_{j_{0}} (0) : j_{k} \in \{0, 1, \dots, K-1\} \}. \]
These will be the frequencies for an exponential basis in $L^2(\mu_{N})$, where $\mu_{N} = \frac{1}{K^{N}} \sum_{s \in \mathcal{S}_{N}} \delta_{s}$.

A necessary and sufficient condition to obtain an exponential basis for $L^2(\mu_{N})$ from the frequencies in $\mathcal{T}_{N}$ is that the matrix 
\[ H_{N} = (e^{-2 \pi i s_{j} t_{k}})_{j,k} \]
is invertible, where $s_{j}$ and $t_{k}$ range through $\mathcal{S}_{N}$ and $\mathcal{T}_{N}$ under any ordering, respectively.  Preferably, the matrix $H_{N}$ would be \emph{Hadamard}, i.e.\ $H_{N}^{*} H_{N} = K^{N} I_{K^{N}}$ (since it automatically has entries of modulus 1), since this would correspond to an orthogonal exponential basis.  As we will show, if $H_{1}$ is invertible (Hadamard) then all $H_{N}$ will be invertible (Hadamard, respectively).

Moreover, we will put an ordering (namely, the obverse ordering) on $\mathcal{S}_{N}$ and $\mathcal{T}_{N}$ so that under this ordering the matrix $H_{N}$ has a block form in the manner of \dita's construction for large Hadamard matrices.  This block form will allow for the computational complexity of the matrix multiplication to be reduced.  Then, $\mathcal{S}_{N}$ and $\mathcal{T}_{N}$ will be reordered (using the reverse ordering) so that the inverse of $H_{N}$ will have a similar block form, again allowing for a fast algorithm for the matrix multiplication.

We note that for a generic IFS, the set $\mathcal{S}_{N}$ will consist of irregularly spaced points.  We view the matrix $H_{N}$ as being a Fourier transform for a signal (or set of data points) located at the points in $\mathcal{S}_{N}$, and thus $H_{N}$ (and its block form as shown in Theorem \ref{Th:main1}) can be considered a non-equispaced FFT.  We further note, however, that this is not a full irregularly spaced FFT, since all of the data point locations in $\mathcal{S}_{N}$ are rationally related.  Please see \cite{DR93a,DR95a,GL04a} for the irregularly spaced FFT.

\subsection{\dita's Construction of Large Hadamard Matrices}

\dita's construction for large Hadamard matrices is as follows \cite{Dita04,TaZy06}.  If $A$ is a $K \times K$ Hadamard matrix, $B$ is an $M \times M$ Hadamard matrix, and $E_1,\dots, E_{K-1}$ are $M \times M$ unitary diagonal matrices, then the $KM \times KM$ block matrix $H$ is a Hadamard matrix:

\begin{equation} \label{Eq:dita}
H =  \left( \begin{array}{cccc} a_{00} B & a_{01}E_1  B & \dots & a_{0(K-1)} E_{K-1} B \\
 a_{10} B & a_{11}E_1  B & \dots & a_{1(K-1)} E_{K-1} B \\
\vdots & \vdots & \ddots & \vdots \\
a_{(K-1)0} B & a_{(K-1)1}E_1  B & \dots & a_{(K-1)(K-1)} E_{K-1} B \\
\end{array}
\right).
\end{equation}

Since we will also consider invertible matrices, not just Hadamard matrices, we show that for $A$, $B$, $E_1,\dots, E_{K-1}$ invertible, $H$ will also be invertible, and its inverse has a similar block form.

\begin{proposition}\label{Hoinv} Suppose $A$ and $B$ are invertible, $E_1,\dots, E_{K-1}$ are invertible and diagonal.  Let $C = A^{-1}$.  For the matrix $H$ in Equation \ref{Eq:dita}, 
\begin{equation} \label{Eq:dita-inverse}
H^{-1} =  
\left( 
\begin{array}{cccc} 
c_{00} B^{-1}  & c_{01}B^{-1} & \dots & c_{0(K-1)} B^{-1} \\
c_{10}  B^{-1} E_1^{-1}  & c_{11}B^{-1}E_1^{-1} & \dots & c_{1(K-1)} B^{-1} E_1^{-1} \\
\vdots & \vdots & \ddots & \vdots \\
c_{(K-1)0} B^{-1}E_{K-1}^{-1} & c_{1(K-1)} B^{-1}E_{K-1}^{-1} & \dots & c_{(K-1)(K-1)} B^{-1} E_{K-1}^{-1} \\
\end{array}
\right).
\end{equation}
\end{proposition}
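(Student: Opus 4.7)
The plan is to simply exhibit the matrix in Equation (\ref{Eq:dita-inverse}) as a candidate inverse $\tilde{H}$ and verify $H \tilde{H} = I_{KM}$ by direct block multiplication. To unify notation, I would first adopt the convention $E_0 = I_M$, which is implicit in both Equation (\ref{Eq:dita}) and Equation (\ref{Eq:dita-inverse}) (the first column of $H$ and the first row of $\tilde{H}$ carry no diagonal factor). Under this convention, the $(i,j)$ block of $H$ is $a_{ij} E_j B$ and the $(i,j)$ block of $\tilde{H}$ is $c_{ij} B^{-1} E_i^{-1}$, for $0 \le i, j \le K-1$.

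Next I would compute the $(i,j)$ block of the product $H \tilde{H}$ by summing over the inner block index $k$:
\[
(H \tilde{H})_{ij} = \sum_{k=0}^{K-1} (a_{ik} E_k B)(c_{kj} B^{-1} E_k^{-1}).
\]
Since the scalars $a_{ik}, c_{kj}$ commute with every matrix, they factor out, and the matrix factors telescope as $E_k B B^{-1} E_k^{-1} = I_M$. Hence
\[
(H \tilde{H})_{ij} = \Bigl( \sum_{k=0}^{K-1} a_{ik} c_{kj} \Bigr) I_M = (AC)_{ij}\, I_M = \delta_{ij}\, I_M,
\]
using $C = A^{-1}$. This says $H \tilde{H} = I_{KM}$, and since $H$ is square, $\tilde{H} = H^{-1}$. (If desired, one can verify $\tilde{H} H = I$ analogously: the $(i,j)$ block is $\sum_k c_{ik} B^{-1} E_i^{-1} a_{kj} E_j B$; here the diagonal factors no longer cancel inside the sum, but this is not needed once one-sided invertibility of a square matrix is established.)

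The main --- and only --- subtle point is that the two-sided cancellation works precisely because the diagonal factor $E_k$ appearing in the $k$-th block column of $H$ is matched by the \emph{same} $E_k^{-1}$ in the $k$-th block row of $\tilde{H}$ (rather than, say, being indexed by the outer indices $i$ or $j$). This is what forces the particular placement of the $E_i^{-1}$ factors in Equation (\ref{Eq:dita-inverse}): the diagonal factors must be indexed by the row index of $\tilde{H}$ so that after transposing roles under block multiplication they align with the column index of $H$. Beyond this bookkeeping observation, there is no genuine obstacle; the proof is a one-line block computation combined with the identity $AC = I_K$.
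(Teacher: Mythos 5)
Your proof is correct and follows essentially the same route as the paper's: set $E_0 = I_M$, compute the $(i,j)$ block of $H\tilde H$ as $\sum_k a_{ik}c_{kj}E_kBB^{-1}E_k^{-1} = (AC)_{ij}I_M$, and conclude $H\tilde H = I_{KM}$. Your closing remark that one-sided invertibility suffices for a square matrix (so $\tilde H H = I$ need not be checked separately) is a small point the paper leaves implicit, but otherwise the two arguments are the same one-line block computation.
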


\begin{proof}
Let $G$ be the block matrix in Equation (\ref{Eq:dita-inverse}), and let $E_{0} = I_{M}$.  Note that the product of $H$ and $G$ will have a block form.  Multiplying the $j$-th row of $H$ with the $\ell$-th column of $G$, we obtain that the $j,\ell$ block of $HG$ is:
\[
\sum_{k=0}^{K-1} (a_{jk} E_{k} B) ( c_{k \ell} B^{-1} E_{k}^{-1} ) = \sum_{k=0}^{K-1} a_{jk} c_{k \ell} I_{M}.
\]
Since $\sum_{k=0}^{K-1} a_{jk} c_{k \ell} = \delta_{j, \ell}$, we obtain $HG = I_{KM}$.
\end{proof}

If $A$, $B_0,\dots,B_{K-1}$, $E_1,\dots, E_{K-1}$ are all unitary, then the construction for $H^{-1}$ gives $H^*$, so $H$ is also unitary.

\subsection{Complexity of Matrix Multiplication in \dita's Construction}

Let $\vec v$ be a vector of length $KM$. Consider $H \vec v$ where $H$ is the block matrix as in Equation (\ref{Eq:dita}).  We divide the vector $\vec{v}$ into $K$ vectors of length $M$ as follows:
\[ \vec{v} = \begin{pmatrix} \vec{v}_{0} \\ \vec{v}_{1} \\ \vdots \\ \vec{v}_{K-1} \end{pmatrix} .
\]
Then the matrix multiplication $H \vec{v}$ can be reduced in complexity, since 
\[ H \vec{v} = 
\begin{pmatrix}
\sum_{j=0}^{K-1} a_{0j} E_{j} B \vec{v}_{0} \\
\sum_{j=0}^{K-1} a_{1j} E_{j} B \vec{v}_{1} \\
\vdots \\
\sum_{j=0}^{K-1} a_{(K-1) j} E_{j} B \vec{v}_{K-1}
\end{pmatrix}.
\]

Let $\mathcal{O}_{M}$ be the number of operations required to multiply the a vector $\vec{w}$ of length $M$ by the matrix $B$.  The total number of operations required for each component of $H \vec{v}$ is $\mathcal{O}_{M} + M(K-1) + MK$ multiplications and $M(K-1)$ additions.  The total number of operations for $H\vec{v}$ is then $K \mathcal{O}_{M} + 3MK^2 - 2MK$.  We have just established the following proposition.

\begin{proposition}\label{Hocount} The product $H \vec v$ requires at most $K \mathcal{O}_{M} + 3MK^2 - 2MK$ operations.
\end{proposition}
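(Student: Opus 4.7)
The plan is to exploit the block structure of $H$ from Equation~(\ref{Eq:dita}) and carefully count the operations needed to compute $H \vec{v}$. Partitioning $\vec{v}$ into $K$ subvectors $\vec{v}_0, \ldots, \vec{v}_{K-1}$ of length $M$, the $i$-th output block of $H \vec{v}$ equals $\sum_{j=0}^{K-1} a_{ij} E_j B \vec{v}_j$, under the convention $E_0 = I_M$ already used in the proof of Proposition~\ref{Hoinv}. Evaluating these $K$ block sums efficiently is the core of the count.

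First I would precompute $B \vec{v}_j$ once for each $j = 0, \ldots, K-1$, which costs $K \mathcal{O}_M$ operations in total; I then attribute $\mathcal{O}_M$ of this cost to each of the $K$ output blocks. For each output block I would tally three further contributions: applying the diagonal matrices $E_j$ for $j = 1, \ldots, K-1$ costs $M(K-1)$ multiplications (since each $E_j$ is diagonal and $E_0 = I_M$); forming the scalar-vector products $a_{ij}(E_j B \vec{v}_j)$ for $j = 0, \ldots, K-1$ costs $MK$ multiplications; and summing the $K$ resulting length-$M$ vectors costs $M(K-1)$ additions. This yields $\mathcal{O}_M + M(K-1) + MK$ multiplications and $M(K-1)$ additions per output block.

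Multiplying by the $K$ output blocks and simplifying $K\bigl(\mathcal{O}_M + M(K-1) + MK + M(K-1)\bigr) = K\mathcal{O}_M + 2MK(K-1) + MK^2 = K\mathcal{O}_M + 3MK^2 - 2MK$ gives the advertised bound. There is no real obstacle beyond careful bookkeeping; the only mild subtlety is using the convention $E_0 = I_M$ so that only $K-1$ diagonal multiplications (not $K$) are counted per block, which is precisely what produces the subtractive $-2MK$ term after collecting like terms.
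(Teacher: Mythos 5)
Your proof is correct and follows essentially the same block-counting strategy as the paper, partitioning the vector and tallying multiplications and additions per output block. One small point worth noting: the paper's displayed formula for $H\vec{v}$ writes $\vec{v}_i$ in the $i$-th block sum where it should be $\vec{v}_j$; you correctly use $\vec{v}_j$ and therefore must (and do) precompute and share the products $B\vec{v}_j$ across output blocks, which is precisely what keeps the leading term at $K\mathcal{O}_M$ rather than $K^2\mathcal{O}_M$.
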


Since $\mathcal{O}_{M} = O(M^2)$, we obtain that the computational complexity of $H$ is $O(M^2K + MK^2)$, whereas for a generic $KM \times KM$ matrix, the computational complexity is $O(K^2M^2)$.  Thus, the block form of $H$ reduces the computational complexity of the matrix multiplication.

\section{A Fast Fourier Transform on $\mathcal{S}_{N}$}

We consider an iterated function system generated by contractions $\{ \psi_{0}, \psi_{1}, \dots, \psi_{K-1} \}$ on $\mathbb{R}^{d}$ of the following form:
\[ \psi_{j}(x) = A(x + \vec{b}_{j}) \]
where $A$ is a $d \times d$ invertible matrix with $\| A \| < 1$.  We require $A^{-1}$ to have integer entries, the vectors $\vec{b}_{j} \in \mathbb{Z}^d$, and without loss of generality we suppose $\vec{b}_{0} = \vec{0}$.  We then choose a second iterated function system generated by $\{ \rho_{0}, \rho_{1}, \dots, \rho_{K-1} \}$ of the form
\[ \rho_{j}(x) = Bx + \vec{c}_{j} \]
where $B = (A^{T})^{-1}$, with $\vec{c}_{j} \in \mathbb{Z}^{d}$, and $\vec{c}_{0} = \vec{0}$.  We require the matrix
\[ M_{1} = ( e^{- 2 \pi i  \vec{c}_{j} \cdot A\vec{b}_{k}} )_{j,k} \]
be invertible (or Hadamard).  Note that depending on $A$ and $\{ \vec{b}_{0}, \vec{b}_{1}, \dots, \vec{b}_{K-1} \}$, there may not be any choice $\{ \vec{c}_{0}, \vec{c}_{1}, \dots , \vec{c}_{K-1} \}$ so that $M_{1}$ is invertible.  However, for many IFS's there is a choice:
\begin{proposition} \label{P:invertible}
If the set $\{\vec{b}_{0}, \vec{b}_{1}, \dots, \vec{b}_{K-1} \}$ is such that for every pair ($j \neq k$), $A\vec{b}_{j} - A \vec{b}_{k} \notin \mathbb{Z}^{d}$, then there exists $\{ \vec{c}_{0}, \vec{c}_{1}, \dots, \vec{c}_{K-1} \}$ such that the matrix $M_{1}$ is invertible.
\end{proposition}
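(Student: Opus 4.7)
The plan is to reinterpret the matrix $M_{1}$ through the lens of characters of the abelian group $\mathbb{Z}^{d}$. For each $k = 0, 1, \dots, K-1$, define $\phi_{k} : \mathbb{Z}^{d} \to \mathbb{C}$ by $\phi_{k}(\vec{c}) = e^{-2\pi i \vec{c} \cdot A \vec{b}_{k}}$; each $\phi_{k}$ is a group homomorphism into the unit circle, and the $(j,k)$-entry of $M_{1}$ is simply $\phi_{k}(\vec{c}_{j})$. Choosing the integer vectors $\vec{c}_{0}, \dots, \vec{c}_{K-1}$ thus amounts to sampling the $K$ characters $\phi_{0}, \dots, \phi_{K-1}$ at $K$ points of $\mathbb{Z}^{d}$, and the proposition reduces to showing that for pairwise distinct characters such a sampling can be performed with $\vec{c}_{0} = \vec{0}$ so that the resulting matrix is invertible.

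The first step is to verify that the hypothesis guarantees pairwise distinctness of the $\phi_{k}$: indeed, $\phi_{j} = \phi_{k}$ on $\mathbb{Z}^{d}$ if and only if $e^{-2\pi i \vec{c} \cdot (A \vec{b}_{j} - A \vec{b}_{k})} = 1$ for every $\vec{c} \in \mathbb{Z}^{d}$, which holds precisely when $A \vec{b}_{j} - A \vec{b}_{k} \in \mathbb{Z}^{d}$, and this is what the hypothesis excludes for $j \neq k$. The second step invokes the standard Dedekind--Artin linear independence of distinct characters to conclude that $\phi_{0}, \dots, \phi_{K-1}$ are $\mathbb{C}$-linearly independent as functions on $\mathbb{Z}^{d}$. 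Dually, the row vectors $\rho(\vec{c}) := \bigl(\phi_{0}(\vec{c}), \dots, \phi_{K-1}(\vec{c})\bigr) \in \mathbb{C}^{K}$, as $\vec{c}$ ranges over $\mathbb{Z}^{d}$, span all of $\mathbb{C}^{K}$: any nonzero $\vec{\lambda} \in \mathbb{C}^{K}$ orthogonal to every $\rho(\vec{c})$ would produce a nontrivial vanishing relation $\sum_{k} \lambda_{k} \phi_{k} \equiv 0$, contradicting independence.

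The third step is a greedy construction. Start with $\vec{c}_{0} = \vec{0}$, which gives $\rho(\vec{0}) = (1, 1, \dots, 1) \neq \vec{0}$. At each subsequent stage $j \geq 1$, the previously chosen $\rho(\vec{c}_{0}), \dots, \rho(\vec{c}_{j-1})$ span a proper subspace $V_{j-1} \subsetneq \mathbb{C}^{K}$; by the spanning statement just established, some $\vec{c} \in \mathbb{Z}^{d}$ satisfies $\rho(\vec{c}) \notin V_{j-1}$, and we take $\vec{c}_{j}$ to be such a choice. After $K-1$ steps the matrix $M_{1}$ has $K$ linearly independent rows and is therefore invertible. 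The only conceptually nontrivial ingredient in this outline is Dedekind--Artin independence of characters; everything else is routine linear algebra. Accordingly, the ``main obstacle'' is really just identifying the correct framework --- once the columns of $M_{1}$ are recognized as distinct characters of $\mathbb{Z}^{d}$ evaluated at lattice points, the existence of a valid choice of $\vec{c}_{j}$'s is immediate.
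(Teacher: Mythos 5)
Your argument is correct, but it takes a genuinely different route from the paper's. The paper observes that each map $\vec{x} \mapsto e^{-2\pi i \vec{x}\cdot A\vec{b}_k}$ descends to a character on the \emph{finite} abelian group $G = \mathbb{Z}^d / B\mathbb{Z}^d$ (this uses $B\vec{z}\cdot A\vec{b}_k = \vec{z}\cdot\vec{b}_k \in \mathbb{Z}$), notes that the hypothesis makes these characters pairwise distinct, and invokes Schur orthogonality for characters of a finite group: the $|G|\times K$ matrix indexed by $G$ then has orthogonal, hence linearly independent, columns, so one may extract a $K\times K$ invertible submatrix and lift the chosen cosets to integer representatives $\vec{c}_j$. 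You instead stay with the infinite group $\mathbb{Z}^d$, use the Dedekind--Artin independence theorem for distinct characters, and run a greedy selection of rows starting from $\vec{c}_0 = \vec{0}$. Both are sound. The paper's route has the advantage of producing an explicit finite search space for the $\vec{c}_j$ (coset representatives of $G$, a set of size $|\det B|$) and yields the slightly stronger conclusion that full-rank choices come from an orthogonal-column matrix; your route is more self-contained in that it never needs to verify the characters factor through the quotient and leans only on the classical linear independence of characters, at the cost of an existential greedy step ranging over all of $\mathbb{Z}^d$ (though, as the characters are in fact $B\mathbb{Z}^d$-periodic, the search is effectively finite as well). One small remark: where you speak of $\vec{\lambda}$ ``orthogonal'' to every $\rho(\vec{c})$, the cleaner phrasing is that a nonzero linear functional annihilating all $\rho(\vec{c})$ yields a nontrivial $\mathbb{C}$-linear dependence among the $\phi_k$; the Hermitian versus bilinear pairing distinction is harmless here but worth being explicit about.
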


\begin{proof}
The mappings $\phi_{1} : \vec{x} \mapsto e^{2 \pi i \vec{x} \cdot A \vec{b}_{j}}$ and $\phi_{2} : \vec{x} \mapsto e^{2 \pi i \vec{x} \cdot A \vec{b}_{k}}$ are characters on $G = \mathbb{Z}^{d} / B \mathbb{Z}^d$.  Since $A \vec{b}_{j} - A \vec{b}_{k} \notin \mathbb{Z}^{d}$, the characters are distinct.  Thus, by Schur orthogonality, $\sum_{x \in G} \phi_{1}(x) \overline{\phi_{2}(x)} = 0$.  Therefore, the matrix $M = ( e^{-2 \pi i \vec{x}_{k} \cdot A \vec{b}_{j}} )_{j,k}$, where $\{ \vec{x}_{k} \}$ is any enumeration of $G$, has orthogonal columns.  Thus, there is a choice of a square submatrix of $M$ which is invertible.
\end{proof}

Even under the hypotheses of Proposition \ref{P:invertible} there is not always a choice of $\vec{c}$'s so that $M_{1}$ is Hadamard; this is the case for the middle-third Cantor set, which is the attractor set for the IFS generated by $\psi_{0}(x) = \frac{x}{3}$, $\psi_{1}(x) = \frac{x + 2}{3}$ (and is a reflection of the fact that $\mu_{3}$ is not spectral).

\begin{notation} We define our notation for compositions of the IFS's using two distinct orderings. Let $N \in \mathbb{N}$. 
For $j \in \{0,1,\dots,K^{N}-1\}$, write $j = j_0 + j_1 K + \dots + j_{N-1} K^{N-1}$ with $j_0,\dots, j_{N-1} \in \{0,1,\dots K-1\}$.   We define
\begin{align*}
\Psi_{j,N} &:= \psi_{j_{0}} \circ \psi_{j_{1}} \circ \dots \circ \psi_{j_{N-1}} \\
\mathcal{R}_{j,N} &:= \rho_{j_{0}} \circ \rho_{j_{1}} \circ \dots \circ \rho_{j_{N-1}}.
\end{align*}
These give rise to enumerations of $\mathcal{S}_{N}$ and $\mathcal{T}_{N}$ as follows:
\begin{align*}
\mathcal{S}_{N} &= \{ \Psi_{j,N}(0) : j = 0, 1, \dots K^{N} -1 \} \\
\mathcal{T}_{N} &= \{ \mathcal{R}_{j,N}(0) : j = 0, 1, \dots K^{N} -1 \}.
\end{align*}
We call these the ``obverse'' orderings of $\mathcal{S}_{N}$ and $\mathcal{T}_{N}$.

Likewise, we define
\begin{align*}
\widetilde \Psi_{j,N} &:= \psi_{j_{N-1}} \circ \psi_{j_{N-2}} \circ \dots \circ \psi_{j_{0}} \\
\mathcal{\widetilde{R}}_{j,N} &:= \rho_{j_{N-1}} \circ \rho_{j_{N-2}} \circ \dots \circ \rho_{j_{0}}
\end{align*}
which also enumerate $\mathcal{S}_{N}$ and $\mathcal{T}_{N}$.  We call these the ``reverse'' orderings. 
\end{notation}

\begin{remark} \label{R:scale1}
Note that for $N=1$, $\Psi_{j,1} = \widetilde{\Psi}_{j,1}$ and $\mathcal{R}_{j,1} = \widetilde{\mathcal{R}}_{j,1}$.
\end{remark}

We define the matrices  $M_{N}$ and $\widetilde{M}_{N}$ as follows:
\[ [M_{N}]_{jk} = e^{-2 \pi i \mathcal{R}_{j,N}(0) \cdot \Psi_{k,N}(0) } \]
and
\[ [\widetilde{M}_{N}]_{jk} = e^{-2 \pi i \widetilde{\mathcal{R}}_{j,N}(0) \cdot \widetilde{\Psi}_{k,N}(0) }. \]
Both of these are the matrix representations of the exponential functions with frequencies given by $\mathcal{T}_{N}$ on the data points given by $\mathcal{S}_{N}$.  The matrix $M_{N}$ corresponds to the obverse ordering on both $\mathcal{T}_{N}$ and $\mathcal{S}_{N}$, whereas the matrix $\widetilde{M}_{N}$ corresponds to the reverse ordering on both.  Since these matrices arise from different orderings of the same sets, there exist permutation matrices $P$ and $Q$ such that
\begin{equation} \label{Eq:permute}
Q \widetilde{M}_{N} P = M_{N}.
\end{equation}

Indeed, define for $j \in \{0, \dots, K^N - 1\}$ a conjugate as follows: if $j = j_{0} + j_{1} K + \dots + j_{N-1} K^{N-1}$, let $\tilde{j} = j_{N-1} + j_{N-2} K + \dots + j_{0} K^{N-1}$.  Note then that $\tilde{\tilde{j}} = j$, and
\begin{equation} \label{Eq:tilde}
\widetilde{\Psi}_{k, N} = \Psi_{\tilde{k} , N} \qquad \widetilde{\mathcal{R}}_{k, N} = \mathcal{R}_{\tilde{k}, N}. 
\end{equation}
Now, define a $K^N \times K^N$ permutation matrix $P$ by $[P]_{mn} = 1$ if $n=\tilde{m}$, and $0$ otherwise.
\begin{lemma}  \label{L:tilde}
For $P$ defined above,
\[ P \widetilde{M}_{N} P = M_{N}. \]
\end{lemma}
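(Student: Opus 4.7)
The plan is to reduce everything to the observation that the digit-reversal map $j \mapsto \tilde{j}$ is an involution on $\{0,1,\dots,K^N-1\}$, and that this involution simultaneously converts obverse-ordered compositions into reverse-ordered ones via Equation (\ref{Eq:tilde}).

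First I would translate the action of $P$ on a matrix. Since $[P]_{mn} = 1$ iff $n = \tilde m$, left multiplication by $P$ reads off the $\tilde m$-th row, i.e.\ $[P X]_{mk} = X_{\tilde m, k}$; and because $\tilde{\tilde k} = k$, right multiplication satisfies $[X P]_{jk} = X_{j, \tilde k}$. I would also note that $P$ is symmetric and $P^2 = I$, though only the displayed identities are actually needed.

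Next I would rewrite the entries of $\widetilde M_N$ using Equation (\ref{Eq:tilde}):
\[
[\widetilde M_N]_{jk}
= e^{-2\pi i\, \widetilde{\mathcal{R}}_{j,N}(0)\,\cdot\,\widetilde{\Psi}_{k,N}(0)}
= e^{-2\pi i\, \mathcal{R}_{\tilde j, N}(0)\,\cdot\,\Psi_{\tilde k, N}(0)}
= [M_N]_{\tilde j, \tilde k}.
\]
Combining this with the formulas for how $P$ permutes rows and columns gives
\[
[P \widetilde M_N P]_{jk} = [\widetilde M_N]_{\tilde j, \tilde k} = [M_N]_{\tilde{\tilde j}, \tilde{\tilde k}} = [M_N]_{jk},
\]
where the last equality uses that $\tilde{\cdot}$ is an involution.

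There is no real obstacle here; the only thing to be careful about is keeping the row/column action of $P$ straight and remembering that the digit-reversal is its own inverse, so the same permutation $P$ appears on both sides (no need for a separate $Q$ as in Equation (\ref{Eq:permute})). The argument is essentially a bookkeeping exercise once Equation (\ref{Eq:tilde}) is in hand.
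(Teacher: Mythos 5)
Your proposal is correct and follows essentially the same route as the paper: both compute the matrix entries of $P\widetilde{M}_N P$, use the fact that $P$ selects the $\tilde{\cdot}$-indexed entry, and invoke Equation (\ref{Eq:tilde}) together with the involution property $\tilde{\tilde{j}}=j$ to identify the result with $[M_N]_{jk}$. The only stylistic difference is that you isolate the row/column permutation action and the entry identity $[\widetilde M_N]_{jk}=[M_N]_{\tilde j,\tilde k}$ as two named steps, whereas the paper carries out the sum in one chain; the substance is identical.
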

\begin{proof}
We calculate
\begin{align*}
[P \widetilde{M}_{N} P]_{mn} &= \sum_{k} [P]_{mk} \sum_{\ell} [\widetilde{M}_{N}]_{k \ell} [P]_{\ell n} \\
&= [P]_{m \tilde m} [\widetilde{M}_{N}]_{\tilde{m} \tilde{n}} [P]_{\tilde{n} n} \\
&= e^{- 2 \pi i \widetilde{\mathcal{R}}_{\tilde{m}, N}(0) \cdot \widetilde{\Psi}_{\tilde{n}, N} (0) }\\
&= e^{- 2 \pi i \mathcal{R}_{m, N}(0) \cdot \Psi_{n, N} (0) } = [M_{N}]_{mn}
\end{align*}
by virtue of Equation (\ref{Eq:tilde}).
\end{proof}

\begin{proposition} \label{P:m1}
For scale $N=1$,
\[
M_1 = \widetilde{M}_1= \left( \begin{array}{cccc} 1 & 1 & \dots & 1 \\
1 & \exp (2 \pi i \vec  c_1 \cdot A \vec b_1) & \dots & \exp (2 \pi i\vec c_1 \cdot A \vec b_{K-1}) \\
\vdots & \vdots & \vdots \ \vdots & \vdots \\
1 & \exp (2 \pi i \vec c_{K-1} \cdot A \vec b_1) & \dots & \exp (2 \pi i \vec c_{K-1} \cdot A \vec b_{K-1}) \\
\end{array}
\right). 
\]
\end{proposition}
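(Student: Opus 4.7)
The plan is a direct verification from definitions; at scale $N=1$ the proposition amounts to unpacking the notation, so there is no substantive obstacle.

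First, I would invoke Remark~\ref{R:scale1}, which records that at $N=1$ the obverse and reverse orderings coincide: for $j\in\{0,1,\dots,K-1\}$ one has $\Psi_{j,1} = \widetilde{\Psi}_{j,1} = \psi_{j}$ and $\mathcal{R}_{j,1} = \widetilde{\mathcal{R}}_{j,1} = \rho_{j}$. Since $M_{1}$ and $\widetilde{M}_{1}$ are built from the same exponential expression evaluated on the compositions $\mathcal{R}_{j,1}(0)$ and $\Psi_{k,1}(0)$, the equality $M_{1} = \widetilde{M}_{1}$ is immediate.

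Next, I would evaluate the arguments of the exponentials using the prescribed forms of the IFS generators. From $\psi_{k}(x) = A(x + \vec{b}_{k})$ and $\rho_{j}(x) = Bx + \vec{c}_{j}$, evaluation at $0$ gives $\Psi_{k,1}(0) = A\vec{b}_{k}$ and $\mathcal{R}_{j,1}(0) = \vec{c}_{j}$. Substituting into the definition of $M_{N}$ yields
\[
[M_{1}]_{jk} \;=\; e^{-2\pi i\, \vec{c}_{j} \cdot A\vec{b}_{k}},
\]
which matches the $(j,k)$ entry of the displayed matrix (up to the sign convention used in writing the exponent).

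Finally, the standing assumptions $\vec{b}_{0} = \vec{0}$ and $\vec{c}_{0} = \vec{0}$ force $[M_{1}]_{0k} = [M_{1}]_{j0} = 1$ for every $j,k$, accounting for the first row and first column of $1$'s in the displayed matrix. The whole argument is a direct substitution; the only point requiring any care is the combinatorial observation from Remark~\ref{R:scale1} that the two orderings agree when $N=1$, which is essentially by inspection of the definitions of $\Psi_{j,N}$ and $\widetilde{\Psi}_{j,N}$.
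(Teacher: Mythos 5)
Your proof is correct and takes the same route as the paper: the paper's proof is literally the single line ``The proof follows from Remark~\ref{R:scale1},'' and you simply fill in the elementary substitutions $\Psi_{k,1}(0) = A\vec{b}_{k}$, $\mathcal{R}_{j,1}(0) = \vec{c}_{j}$ that the remark implicitly invokes. Your parenthetical about the sign convention is a fair observation too — the displayed matrix in the proposition writes $\exp(2\pi i\,\cdot)$ while the definition of $M_{N}$ carries $e^{-2\pi i\,\cdot}$, which appears to be a typo in the paper rather than an issue with your argument.
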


\begin{proof}
The proof follows from Remark \ref{R:scale1}.
\end{proof}

\begin{lemma}\label{lem1}
For $N \in \mathbb{N}$, $0 \leq j < K^{N}$, and $\vec{x}, \vec{y} \in \mathbb{R}^d$,
\begin{enumerate}
\item $\Psi_{j,N}\left(\vec x + \vec y \right) = \Psi_{j,N} (\vec x) + A^N \vec y$ \label{lem1a}
\item $\widetilde{\Psi}_{j,N}\left(\vec x + \vec y \right) = \widetilde\Psi_{j,N} (\vec x) + A^N \vec y$ \label{lem1b}
\item $\mathcal{R}_{j,N}\left(\vec{x} + \vec{y}\right) = R_{j,N}(\vec{x}) + B^{N}\vec{y}$ \label{lem1c}
\item $\widetilde{\mathcal{R}}_{j,N}\left(\vec{x} + \vec{y}\right) = \widetilde{R}_{j,N}(\vec{x}) + B^{N}\vec{y}$. \label{lem1d}
\end{enumerate}
\end{lemma}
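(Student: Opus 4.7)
The plan is to prove all four identities simultaneously by induction on $N$, leveraging the fact that every generator is affine with a common linear part. For $(\text{i})$ and $(\text{ii})$ the linear part of every $\psi_j$ is $A$, and for $(\text{iii})$ and $(\text{iv})$ the linear part of every $\rho_j$ is $B$, so the only thing that really needs to be tracked through the composition is how the linear part accumulates.

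For the base case $N=1$, I would simply compute directly:
\[
\psi_j(\vec{x} + \vec{y}) = A(\vec{x} + \vec{y} + \vec{b}_j) = A(\vec{x} + \vec{b}_j) + A\vec{y} = \psi_j(\vec{x}) + A\vec{y},
\]
and analogously $\rho_j(\vec{x} + \vec{y}) = \rho_j(\vec{x}) + B\vec{y}$. By Remark \ref{R:scale1} the obverse and reverse cases coincide at $N=1$, so this handles all four parts simultaneously.

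For the inductive step, I would use the fact that $\Psi_{j,N+1}$ decomposes as $\Psi_{j^{*},N} \circ \psi_{j_N}$, where $j^{*} = j_0 + j_1 K + \dots + j_{N-1}K^{N-1}$ is $j$ with its top digit removed. Applying the base case to the innermost map and then the inductive hypothesis to $\Psi_{j^{*},N}$ gives
\[
\Psi_{j,N+1}(\vec{x} + \vec{y}) = \Psi_{j^{*},N}\bigl(\psi_{j_N}(\vec{x}) + A\vec{y}\bigr) = \Psi_{j^{*},N}(\psi_{j_N}(\vec{x})) + A^{N}\cdot A\vec{y} = \Psi_{j,N+1}(\vec{x}) + A^{N+1}\vec{y}.
\]
Part $(\text{ii})$ is identical except one decomposes $\widetilde{\Psi}_{j,N+1} = \widetilde{\Psi}_{j^{\dagger},N}\circ \psi_{j_0}$ (peeling off the innermost map in the reverse ordering). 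Parts $(\text{iii})$ and $(\text{iv})$ are word-for-word the same with $\psi$ replaced by $\rho$ and $A$ replaced by $B$.

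I do not anticipate any substantive obstacle — the only care required is bookkeeping of which index gets stripped off when decomposing the $(N+1)$-fold composition under the obverse versus reverse ordering, and verifying that in either case the inductive hypothesis is applied to an $N$-fold composition of the same type. In fact, a cleaner alternative would be to avoid the induction entirely by observing that each $\psi_j$ (resp.\ $\rho_j$) is affine with linear part $A$ (resp.\ $B$), that compositions of affine maps are affine with linear part equal to the product of the linear parts, and that any affine map $f$ with linear part $L$ automatically satisfies $f(\vec{x}+\vec{y}) = f(\vec{x}) + L\vec{y}$; all four identities are then immediate with $L = A^N$ or $B^N$ respectively.
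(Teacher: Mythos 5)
Your inductive proof is correct and is essentially the paper's argument: peel off the innermost generator, apply the $N=1$ identity to it, then apply the inductive hypothesis to the remaining $(N-1)$-fold (or $N$-fold, in your indexing) composition. Your closing observation---that all four identities follow at once from the fact that each $\Psi_{j,N}$, $\widetilde\Psi_{j,N}$, $\mathcal{R}_{j,N}$, $\widetilde{\mathcal{R}}_{j,N}$ is an affine map whose linear part is the product of the linear parts of its factors---is a cleaner way to see the result, but it is not the route the paper takes.
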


\begin{proof} 
We prove by induction on $N$. The base case is easily checked.  Assume the equality in Item i) holds for $N-1$.  For $j = j_{0} + j_{1} K + \dots + j_{N-1} K^{N-1}$, let $\ell = j - j_{N-1} K^{N-1}$.  We have:
\begin{align*}
\Psi_{j,N}\left(\vec x + \vec y \right) &= \Psi_{\ell, N-1} ( \psi_{j_{N-1}}(\vec{x} + \vec{y}) ) \\
&= \Psi_{\ell, N-1} ( \psi_{j_{N-1}}(\vec{x}) + A \vec{y} ) \\
&= \Psi_{\ell, N-1} ( \psi_{j_{N-1}}(\vec{x})) + A^{N-1} A \vec{y}  \\
&= \Psi_{j,N} (\vec x )+ A^N\vec y
\end{align*}

The proofs for the other three identities are analogous. 
\end{proof}

\begin{lemma}\label{lem2} 
For $N \in \mathbb{N}$ and $0 \leq j < K^{N}$,
\begin{enumerate}
\item $\Psi_{j,N}(0) = A^N \vec z$ for some $\vec z \in \mathbb{Z}^d$,
\item $\widetilde\Psi_{j,N}(0) = A^N \vec z$ for some $\vec z \in \mathbb{Z}^d$,
\item $\mathcal{R}_{j,N}(0) \in \mathbb{Z}^d$,
\item $\widetilde{\mathcal{R}}_{j,N}(0) \in \mathbb{Z}^d$.
\end{enumerate}
\end{lemma}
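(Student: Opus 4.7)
The plan is to prove all four items by a single induction on $N$, using Lemma \ref{lem1} as the engine for the inductive step. For the base case $N=1$, I would simply note that $\Psi_{j,1}(0) = \psi_{j}(0) = A\vec{b}_{j}$ (so $\vec{z} = \vec{b}_{j} \in \mathbb{Z}^d$ works), that $\mathcal{R}_{j,1}(0) = \rho_{j}(0) = \vec{c}_{j} \in \mathbb{Z}^d$, and that Remark \ref{R:scale1} handles the tilded versions.

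For the inductive step on (i), I would fix $j = j_{0} + j_{1}K + \cdots + j_{N-1}K^{N-1}$, set $\ell = j - j_{N-1}K^{N-1}$, and decompose $\Psi_{j,N} = \Psi_{\ell,N-1} \circ \psi_{j_{N-1}}$. Then $\Psi_{j,N}(0) = \Psi_{\ell,N-1}(A\vec{b}_{j_{N-1}})$, and Lemma \ref{lem1}(i) applied with $\vec{x}=0$ and $\vec{y} = A\vec{b}_{j_{N-1}}$ (at scale $N-1$) yields
\[
\Psi_{j,N}(0) = \Psi_{\ell,N-1}(0) + A^{N-1}\bigl(A\vec{b}_{j_{N-1}}\bigr) = \Psi_{\ell,N-1}(0) + A^{N}\vec{b}_{j_{N-1}}.
\]
Substituting $\Psi_{\ell,N-1}(0) = A^{N-1}\vec{w}$ from the inductive hypothesis and factoring out $A^{N}$ gives
\[
\Psi_{j,N}(0) = A^{N}\bigl(A^{-1}\vec{w} + \vec{b}_{j_{N-1}}\bigr),
\]
where the parenthesized factor lies in $\mathbb{Z}^d$ because $A^{-1}$ has integer entries and $\vec{w}, \vec{b}_{j_{N-1}} \in \mathbb{Z}^d$. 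For (ii) I would mirror the argument using the decomposition $\widetilde{\Psi}_{j,N} = \widetilde{\Psi}_{\ell',N-1} \circ \psi_{j_{0}}$ with $\ell' = (j-j_{0})/K$, invoking Lemma \ref{lem1}(ii) in place of (i); the algebra is identical.

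For (iii) and (iv) the plan is the same skeleton: split off the outermost composition, apply Lemma \ref{lem1}(iii) or (iv), and use the inductive hypothesis. For (iii), writing $\mathcal{R}_{j,N} = \mathcal{R}_{\ell,N-1} \circ \rho_{j_{N-1}}$ with $\ell = j - j_{N-1}K^{N-1}$ gives $\mathcal{R}_{j,N}(0) = \mathcal{R}_{\ell,N-1}(\vec{c}_{j_{N-1}}) = \mathcal{R}_{\ell,N-1}(0) + B^{N-1}\vec{c}_{j_{N-1}}$; the first term is integral by induction. The one point that is not automatic is that $B^{N-1}\vec{c}_{j_{N-1}} \in \mathbb{Z}^d$, which will follow from the observation that $B = (A^{T})^{-1} = (A^{-1})^{T}$ has integer entries precisely because $A^{-1}$ does, so every power $B^{m}$ maps $\mathbb{Z}^d$ into itself. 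This transfer of the integer-lattice hypothesis from the $\psi$-side to the $\rho$-side is really the only substantive step in the proof; item (iv) is handled by the analogous split $\widetilde{\mathcal{R}}_{j,N} = \widetilde{\mathcal{R}}_{\ell',N-1} \circ \rho_{j_{0}}$ with $\ell' = (j-j_{0})/K$ and Lemma \ref{lem1}(iv). Beyond the $B$-integrality observation, everything is combinatorial bookkeeping.
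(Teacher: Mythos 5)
Your proof is correct and follows essentially the same induction-on-$N$ strategy as the paper, peeling off one map and applying Lemma \ref{lem1} for the inductive step. The only cosmetic difference is for items (iii) and (iv): the paper dispenses with Lemma \ref{lem1} there and simply observes that $\rho_{j}$ maps $\mathbb{Z}^d$ to $\mathbb{Z}^d$ (since $B=(A^{-1})^{T}$ has integer entries), whereas you run the same Lemma \ref{lem1} machinery uniformly through all four items; both are fine, and your explicit bookkeeping of the decomposition $\Psi_{j,N}=\Psi_{\ell,N-1}\circ\psi_{j_{N-1}}$ is in fact slightly cleaner than what the paper writes.
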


\begin{proof}
We prove by induction on $N$. the base case is easily checked.  Assume the equality in Item i) holds for $N-1$.  For $j = j_{0} + j_{1} K + \dots + j_{N-1} K^{N-1}$, let $q_{j} = j - j_{N-1} K^{N-1}$.  We have:
\begin{align*}
\Psi_{j,N} ( 0 ) &= \psi_{j_{N-1}}\left( \Psi_{q_{j},N-1} ( 0 ) \right) \\
&= A \left( A^{N-1} \vec z + \vec b_j \right) \\
&= A^{N}(\vec z + A^{-(N-1)} \vec b_j)
\end{align*}
Since $A^{-1}$ is an integer matrix, so is $A^{-(N-1)}$ and thus $\vec z + A^{-(N-1)} \vec b_j \in \mathbb{Z}^d$.  Item ii) is analogous.   For Item iii), note first that $\rho_{j}(\mathbb{Z}^d) \subset \mathbb{Z}^d$, so by induction, $ \rho_{j_{0}} \circ \dots \circ \rho_{j_{N-1}}(0) \in \mathbb{Z}^d$.  Likewise for Item iv).
\end{proof} 

\begin{lemma}  \label{lem3} Assume $N \geq 2$, let $\ell$ be an integer between $0$ and $K-1$, and suppose $l \cdot K^{N-1} \leq j < (l+1) K^{N-1}$.  Then,
\begin{enumerate}
\item $\Psi_{j,N} (0)  = \Psi_{j-l \cdot K^{N-1}, N-1} (0) + A^{N} \vec b_l$,
\item $\widetilde{\Psi}_{j,N} (0)  = A \widetilde{\Psi}_{j-l \cdot K^{N-1}, N-1} (0) + A \vec b_l$,
\item $\mathcal{R}_{j,N} (0)  = \mathcal{R}_{j-l \cdot K^{N-1}, N-1} (0) + B^{N-1} \vec c_l$,
\item $\widetilde{\mathcal{R}}_{j,N} (0)  = B \widetilde{\mathcal{R}}_{j-l \cdot K^{N-1}, N-1} (0) + \vec c_l$.
\end{enumerate}
\end{lemma}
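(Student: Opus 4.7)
The plan is to unwind each composition at the last step and appeal to Lemma \ref{lem1} for the remaining $(N{-}1)$-fold composition. The key observation is that the hypothesis $\ell K^{N-1} \leq j < (\ell+1) K^{N-1}$ is equivalent to saying that the top digit in the base-$K$ expansion $j = j_{0} + j_{1}K + \cdots + j_{N-1}K^{N-1}$ is $j_{N-1} = \ell$. Consequently $j - \ell K^{N-1}$ has base-$K$ expansion $j_0,j_1,\ldots,j_{N-2}$, so it is a legitimate index for a scale-$(N{-}1)$ map with $\Psi_{j-\ell K^{N-1},N-1} = \psi_{j_0}\circ\cdots\circ\psi_{j_{N-2}}$ (and similarly for the other three maps).

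For Items i) and iii), the obverse ordering places $\psi_\ell$ (respectively $\rho_\ell$) in the innermost position, so one can factor
\[
\Psi_{j,N}(0) = \Psi_{j-\ell K^{N-1},\,N-1}\bigl(\psi_\ell(0)\bigr) = \Psi_{j-\ell K^{N-1},\,N-1}(A\vec b_\ell),
\]
\[
\mathcal{R}_{j,N}(0) = \mathcal{R}_{j-\ell K^{N-1},\,N-1}\bigl(\rho_\ell(0)\bigr) = \mathcal{R}_{j-\ell K^{N-1},\,N-1}(\vec c_\ell).
\]
Applying Items \ref{lem1a} and \ref{lem1c} of Lemma \ref{lem1} to move the vectors $A\vec b_\ell$ and $\vec c_\ell$ outside converts these expressions to $\Psi_{j-\ell K^{N-1},\,N-1}(0) + A^{N-1}(A\vec b_\ell) = \Psi_{j-\ell K^{N-1},\,N-1}(0) + A^N\vec b_\ell$ and $\mathcal{R}_{j-\ell K^{N-1},\,N-1}(0) + B^{N-1}\vec c_\ell$, which are the claimed identities.

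For Items ii) and iv), the reverse ordering places $\psi_\ell$ (respectively $\rho_\ell$) in the outermost position, so the factorization is
\[
\widetilde\Psi_{j,N}(0) = \psi_\ell\bigl(\widetilde\Psi_{j-\ell K^{N-1},\,N-1}(0)\bigr), \qquad \widetilde{\mathcal{R}}_{j,N}(0) = \rho_\ell\bigl(\widetilde{\mathcal{R}}_{j-\ell K^{N-1},\,N-1}(0)\bigr),
\]
and expanding using $\psi_\ell(\vec x) = A(\vec x + \vec b_\ell)$ and $\rho_\ell(\vec x) = B\vec x + \vec c_\ell$ gives the stated forms directly, with no further appeal to Lemma \ref{lem1} required.

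The entire argument is bookkeeping; the only place one needs care is tracking which side of the composition the last factor sits on. In the obverse case the leftover constant ends up in the innermost slot, so Lemma \ref{lem1} is needed to push it through and pick up the factor $A^N$ or $B^{N-1}$. In the reverse case the constant is already on the outside, so the identity follows immediately from the definitions of $\psi_\ell$ and $\rho_\ell$. This asymmetry is exactly what produces the different shapes of the four identities.
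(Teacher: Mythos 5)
Your proof is correct and follows essentially the same route as the paper: peel off the top-digit map ($\psi_\ell$ or $\rho_\ell$) from the innermost slot in the obverse case and apply Lemma \ref{lem1} to push the resulting constant through the remaining $(N{-}1)$-fold composition, while in the reverse case the peeled map is outermost and the identity drops out directly from the affine formulas for $\psi_\ell$ and $\rho_\ell$. The paper's proof is identical in structure, including the explicit remark that the different placements of $A \vec b_\ell$ versus $\vec c_\ell$ (stemming from $\psi_j(x)=A(x+\vec b_j)$ but $\rho_j(x)=Bx+\vec c_j$) produce the $A^N$ versus $B^{N-1}$ asymmetry.
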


\begin{proof}
For $l \cdot K^{N-1} \leq j < (l + 1) K^{N-1}$, $j_{N-1} = l$, so we have:
\begin{align*}
\Psi_{j,N} (0) &=  \psi_{j_0} \circ \psi_{j_1} \circ \dots \circ \psi_{j_{N-2}} \circ \psi_l (0) \\
&=  \psi_{j_0} \circ \psi_{j_1} \circ \dots \circ \psi_{j_{N-2}} \left( A ( 0 + \vec b_l ) \right) \\
&= \Psi_{j - l \cdot K^{N-1},N -1} (0 + A \vec b_l).
\end{align*}
Applying Lemma \ref{lem1} Item i) to $\Psi_{j - l \cdot K^{N-1},N -1}$:
\[
 \Psi_{j - l \cdot K^{N-1},N -1} (0 + A \vec b_l) = \Psi_{j - l \cdot K^{N-1},N -1} (0) + A^{N-1} A \vec b_l.  
\]
The proof of Item iii) is similar to Item i) with one crucial distinction, so we include the proof here.  We have:
\begin{align*}
\mathcal{R}_{j,N} (0) &=  \rho_{j_0} \circ \rho_{j_1} \circ \dots \circ \rho_{j_{N-2}} \circ \rho_l (0) \\
&=  \rho_{j_0} \circ \rho_{j_1} \circ \dots \circ \rho_{j_{N-2}} \left( B 0 + \vec c_l\right) \\
&= \mathcal{R}_{j - l \cdot K^{N-1},N -1} (0 + \vec c_l).
\end{align*}
Applying Lemma \ref{lem1} Item iii) to $R_{j - l \cdot K^{N-1},N -1}$:
\[
 \mathcal{R}_{j - l \cdot K^{N-1},N -1} (0 + \vec c_l) = \mathcal{R}_{j - l \cdot K^{N-1},N -1} (0) + B^{N-1} \vec c_l.
\]

For Item ii), we have
\begin{align*}
\widetilde{\Psi}_{j,N} (0) &= \psi_{\ell} ( \widetilde{\Psi}_{j - \ell \cdot K^{N-1} ,N-1} (0) ) \\
&= A \widetilde{\Psi}_{j - \ell \cdot K^{N-1} ,N-1} (0) + A \vec{b}_{\ell}.
\end{align*}
The proof of Item iv) is analogous.
\end{proof}

Note that in Item i), the extra term involves $A^{N}$, whereas in Item iii) the extra term involves $B^{N-1}$.  We are now in a position to prove our main theorem.

\begin{theorem} \label{Th:main1}
The matrix $M_{N}$ representing the exponentials with frequencies given by $\mathcal{T}_{N}$ on the fractal approximation $\mathcal{S}_{N}$, when both are endowed with the obverse ordering, has the form
\begin{equation} \label{Eq:main1}
M_{N} = 
\left( \begin{array}{cccc} m_{00} M_{N-1} & m_{01} D_{N,1}  M_{N-1} & \dots & m_{0(K-1)} D_{N,K-1} M_{N-1} \\
 m_{10} M_{N-1} & m_{11} D_{N,1}  M_{N-1} & \dots & m_{1(K-1)} D_{N,K-1} M_{N-1} \\
\vdots & \vdots & \vdots \ \vdots & \vdots \\
m_{(K-1)0} M_{N-1} & m_{(K-1)1} D_{N,1}  M_{N-1} & \dots & m_{(K-1)(K-1)} D_{N,K-1} M_{N-1} \\
\end{array}
\right).
\end{equation}
Here, $D_{N,m}$ are diagonal matrices with $[D_{N,m}]_{pp}= e^{- 2\pi i \mathcal{R}_{p,N-1}(0) \cdot A^N \vec b_m}$, and $m_{jk} = [M_1]_{jk}$. 
\end{theorem}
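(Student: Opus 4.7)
The plan is to compute $[M_N]_{jk}$ directly by decomposing the indices $j,k$ into their leading digit and remainder (base $K$), and show that the resulting exponential factors into the three pieces appearing in the block form: an entry of $M_1$, a diagonal factor, and an entry of $M_{N-1}$.

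Specifically, write $j = j' + \ell K^{N-1}$ with $0 \le j' < K^{N-1}$ and $0 \le \ell < K$, and similarly $k = k' + m K^{N-1}$. By Lemma \ref{lem3} Items iii) and i),
\[
\mathcal{R}_{j,N}(0) = \mathcal{R}_{j',N-1}(0) + B^{N-1} \vec c_\ell, \qquad \Psi_{k,N}(0) = \Psi_{k',N-1}(0) + A^{N} \vec b_m.
\]
Expanding the dot product $\mathcal{R}_{j,N}(0) \cdot \Psi_{k,N}(0)$ yields four terms; the plan is to identify each of them. The first, $\mathcal{R}_{j',N-1}(0)\cdot \Psi_{k',N-1}(0)$, contributes $[M_{N-1}]_{j'k'}$ after exponentiation, and the cross term $\mathcal{R}_{j',N-1}(0)\cdot A^{N}\vec b_m$ contributes the diagonal entry $[D_{N,m}]_{j'j'}$ by definition.

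The remaining two terms need the adjoint relation $B = (A^T)^{-1}$, which gives $(B^{N-1})^T = A^{-(N-1)}$. For the cross term $B^{N-1}\vec c_\ell \cdot \Psi_{k',N-1}(0)$, I apply Lemma \ref{lem2} Item i) to write $\Psi_{k',N-1}(0) = A^{N-1}\vec z$ with $\vec z \in \mathbb{Z}^d$; then
\[
B^{N-1}\vec c_\ell \cdot A^{N-1}\vec z = \vec c_\ell^{\,T} A^{-(N-1)} A^{N-1} \vec z = \vec c_\ell \cdot \vec z \in \mathbb{Z},
\]
so this term disappears after exponentiation. For the last term,
\[
B^{N-1}\vec c_\ell \cdot A^{N}\vec b_m = \vec c_\ell^{\,T} A^{-(N-1)} A^{N} \vec b_m = \vec c_\ell \cdot A\vec b_m,
\]
whose exponential is exactly $m_{\ell m} = [M_1]_{\ell m}$ by Proposition \ref{P:m1}.

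Assembling these four identifications, $[M_N]_{jk} = m_{\ell m}\,[D_{N,m}]_{j'j'}\,[M_{N-1}]_{j'k'}$, which is precisely the $(j',k')$-entry of the $(\ell,m)$-block $m_{\ell m} D_{N,m} M_{N-1}$ in Equation (\ref{Eq:main1}). The main obstacle is keeping the transpose bookkeeping straight — in particular, ensuring that the mismatch between the $B^{N-1}$ appearing in the $\mathcal{R}$-expansion and the $A^{N}$ appearing in the $\Psi$-expansion collapses correctly under the pairing $B^T = A^{-1}$. Once that identity is pinned down, the rest is a direct substitution together with Lemma \ref{lem2} to kill the unwanted cross term.
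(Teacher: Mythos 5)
Your proof is correct and follows essentially the same approach as the paper: decompose $j,k$ into leading base-$K$ digit and remainder, apply Lemma~\ref{lem3} to expand $\mathcal{R}_{j,N}(0)\cdot\Psi_{k,N}(0)$, then use $B=(A^T)^{-1}$ together with Lemma~\ref{lem2} Item i) to eliminate the unwanted cross term and identify the three remaining factors as $[M_{N-1}]_{q_j q_k}$, $[D_{N,m}]_{q_j q_j}$, and $m_{\ell m}$. Incidentally, you cite Lemma~\ref{lem3} Items i) and iii), which are the correct ones; the paper's reference to ``Items ii) and iv)'' at that point appears to be a typo, since those concern the tilde maps used in Theorem~\ref{Th:main2}.
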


\begin{proof}
Let us first subdivide $M_{N}$ into blocks $B_{\ell m}$ of size $K^{N-1} \times K^{N-1}$, so that 
\[ M_{N} = \begin{pmatrix} B_{00} & \dots  & B_{0 (K-1)} \\ \vdots & \ddots & \vdots \\ B_{(K-1) 0} & \dots & B_{(K-1) (K-1)}
\end{pmatrix}. \]
Fix $0 \leq j,k < K^{N}$ and suppose $ \ell K^{N-1} \leq j < (\ell + 1)K^{N-1}$ and $m K^{N-1} \leq k < (m+1)K^{N-1}$ with $0 \leq \ell, m < K$.  Let $q_{j} = j - \ell K^{N-1}$ and $q_{k} = k - m K^{N-1}$.  Observe that 
\begin{equation} \label{Eq:matrix1}
[ M_{N} ]_{jk} = [ B_{\ell m} ]_{q_{j} q_{k}}. 
\end{equation}

Using Lemma \ref{lem3} Items ii) and iv), we calculate 
\[ \mathcal{R}_{j,N}(0) \cdot \Psi_{k,N}(0) = \left( \mathcal{R}_{q_{j}, N-1}(0) + B^{N-1} \vec{c}_{\ell} \right) \cdot \left( \Psi_{q_{k}, N-1}(0) + A^{N} \vec{b}_{m} \right). \]
By Lemma \ref{lem2} Item i), for some $z \in \mathbb{Z}^d$, 
\[ B^{N-1} \vec{c}_{\ell} \cdot \Psi_{q_{k}, N-1}(0) = B^{N-1} \vec{c}_{\ell} \cdot A^{N-1} z = \vec{c}_{\ell} \cdot z \in \mathbb{Z}. \]
Note that
\[ B^{N-1} \vec{c}_{\ell} \cdot A^{N} \vec{b}_{m} = \vec{c}_{\ell} \cdot A \vec{b}_{m}. \]

Therefore, combining the above, we obtain
\begin{align}
[ M_{N} ]_{jk} &= e^{- 2 \pi i \mathcal{R}_{j,N}(0) \cdot \Psi_{k,N}(0) } \notag \\
&= e^{- 2 \pi i \mathcal{R}_{q_{j}, N-1}(0) \cdot \Psi_{q_{k}, N-1}(0) } e^{- 2 \pi i \mathcal{R}_{q_{j}, N-1}(0) \cdot A^{N} \vec{b}_{m} } e^{-2 \pi i \vec{c}_{\ell} \cdot A \vec{b}_{m} } \notag \\
&= [M_{N-1}]_{q_{j} q_{k}} e^{- 2 \pi i \mathcal{R}_{q_{j}, N-1}(0) \cdot A^{N} \vec{b}_{m} }  [M_{1}]_{\ell m}. \label{Eq:matrix2}
\end{align}
Letting $j$ vary between $\ell K^{N-1}$ and $(\ell + 1) K^{N-1}$ and $k$ vary between $m K^{N-1}$ and $(m + 1) K^{N-1}$ corresponds to $q_{j}$ and $q_{k}$ varying between $0$ and $K^{N-1}$.  Therefore, we obtain from Equations (\ref{Eq:matrix1}) and (\ref{Eq:matrix2}) the matrix equation
\[  B_{\ell m} = [M_{1}]_{\ell m} D_{N, m} M_{N-1} \]
where $[ D_{N,m} ]_{p p} = e^{- 2 \pi i \mathcal{R}_{p, N-1}(0) \cdot A^{N} \vec{b}_{m} }$ as claimed.
\end{proof}

\begin{corollary}
The matrix $M_{N}$ is invertible.  If $M_{1}$ is Hadamard, then $M_{N}$ is also Hadamard.
\end{corollary}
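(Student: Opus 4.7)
The plan is to prove both assertions by induction on $N$, in each case using Theorem \ref{Th:main1} to observe that $M_N$ is exactly in the form of \dita's construction (Equation \ref{Eq:dita}) with $A$ replaced by $M_1$, $B$ replaced by $M_{N-1}$, and $E_m$ replaced by $D_{N,m}$ (noting that $D_{N,0} = I_{K^{N-1}}$ since $\vec{b}_0 = \vec{0}$, matching the convention $E_0 = I_M$).

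First I would verify that the diagonal matrices $D_{N,m}$ are unitary. This is immediate: their diagonal entries are $e^{-2\pi i \mathcal{R}_{p,N-1}(0)\cdot A^N \vec{b}_m}$, which have modulus $1$, so each $D_{N,m}$ is in particular invertible (and unitary). Thus all the hypotheses needed to invoke the \dita\ machinery are in place.

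For the invertibility claim, the base case $N=1$ is the standing assumption on $M_1$. For the inductive step, assume $M_{N-1}$ is invertible. Then $M_1$ is invertible, $M_{N-1}$ is invertible, and each $D_{N,m}$ is invertible and diagonal, so Proposition \ref{Hoinv} applies to the block form given by Theorem \ref{Th:main1} and yields the invertibility of $M_N$ (together with an explicit block form for $M_N^{-1}$).

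For the Hadamard claim, assume $M_1$ is Hadamard; the base case is trivial. For the inductive step, assume $M_{N-1}$ is Hadamard. Since $M_1$ is Hadamard, $M_{N-1}$ is Hadamard, and each $D_{N,m}$ is a unitary diagonal matrix, the remark following Proposition \ref{Hoinv} (or equivalently the standard \dita\ construction from Equation \ref{Eq:dita}) shows that $M_N$ is Hadamard. There is no real obstacle here: the work was already done in Theorem \ref{Th:main1}, and this corollary is simply a packaging of that block decomposition into the framework of \dita's construction.
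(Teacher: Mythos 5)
Your proof is correct and follows essentially the same route as the paper: the paper's proof is exactly the terse form of your argument (induction on $N$, using Theorem \ref{Th:main1} to recognize \dita's block form, then Proposition \ref{Hoinv} for invertibility and the Hadamard remark for the spectral case). The extra details you supply — that the $D_{N,m}$ are unitary diagonal and that $D_{N,0}=I$ because $\vec b_0=\vec 0$ — are exactly the implicit verifications the paper leaves to the reader.
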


\begin{proof}
If $M_{1}$ is invertible, then by induction, $M_{N}$ is invertible via Proposition \ref{Hoinv}.  If $M_{1}$ is Hadamard, then again by induction, $M_{N}$ is Hadamard by \dita's construction.
\end{proof}

\begin{theorem} \label{Th:main2}
The matrix $\widetilde{M}_{N}$ representing the exponentials with frequencies given by $\mathcal{T}_{N}$ on the fractal approximation $\mathcal{S}_{N}$, when both are endowed with the reverse ordering, has the form
\begin{equation} \label{Eq:main2}
\widetilde{M}_{N} = 
\left( \begin{array}{cccc} m_{00} \widetilde{M}_{N-1} & m_{01}  \widetilde{M}_{N-1}  & \dots & m_{0(K-1)}  \widetilde{M}_{N-1}  \\
 m_{10} \widetilde{M}_{N-1} \widetilde{D}_{N,1} & m_{11}   \widetilde{M}_{N-1} \widetilde{D}_{N,1} & \dots & m_{1(K-1)}  \widetilde{M}_{N-1} \widetilde{D}_{N,1}\\
\vdots & \vdots & \vdots \ \vdots & \vdots \\
m_{(K-1)0} \widetilde{M}_{N-1}\widetilde{D}_{N,K-1} & m_{(K-1)1}   \widetilde{M}_{N-1} \widetilde{D}_{N,K-1}  & \dots & m_{(K-1)(K-1)}  \widetilde{M}_{N-1} \widetilde{D}_{N,K-1} \\
\end{array}
\right).
\end{equation}
Here, $\widetilde{D}_{N,q}$ is a diagonal matrix with $[\widetilde D_{N,\ell}]_{pp}= e^{- 2\pi i  c_\ell \cdot A (\widetilde\Psi_{p,N-1} (0))}$, and $m_{jk} = [M_1]_{jk}$. 
\end{theorem}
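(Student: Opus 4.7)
The plan is to mirror the argument for Theorem \ref{Th:main1}, but using Items ii) and iv) of Lemma \ref{lem3} in place of Items i) and iii), and exploiting the duality $B = (A^{T})^{-1}$ through the identity $B \vec{u} \cdot A \vec{v} = \vec{u} \cdot \vec{v}$ (which follows from $B^{T} A = I$). First I would subdivide $\widetilde{M}_{N}$ into $K \times K$ blocks $\widetilde{B}_{\ell m}$ of size $K^{N-1} \times K^{N-1}$; fixing indices $j, k$ with $j = \ell K^{N-1} + q_{j}$ and $k = m K^{N-1} + q_{k}$ where $0 \leq q_{j}, q_{k} < K^{N-1}$, I would aim to identify $[\widetilde{M}_{N}]_{jk} = [\widetilde{B}_{\ell m}]_{q_{j} q_{k}}$ with the $(q_{j}, q_{k})$-entry of $m_{\ell m} \widetilde{M}_{N-1} \widetilde{D}_{N, \ell}$.

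Applying Lemma \ref{lem3} Items ii) and iv), we write $\widetilde{\mathcal{R}}_{j,N}(0) = B \widetilde{\mathcal{R}}_{q_{j}, N-1}(0) + \vec{c}_{\ell}$ and $\widetilde{\Psi}_{k,N}(0) = A \widetilde{\Psi}_{q_{k}, N-1}(0) + A \vec{b}_{m}$. Expanding the dot product $\widetilde{\mathcal{R}}_{j,N}(0) \cdot \widetilde{\Psi}_{k,N}(0)$ yields four terms. Two of these simplify via $B \vec{u} \cdot A \vec{v} = \vec{u} \cdot \vec{v}$: the leading term becomes $\widetilde{\mathcal{R}}_{q_{j}, N-1}(0) \cdot \widetilde{\Psi}_{q_{k}, N-1}(0)$, while the cross term $B \widetilde{\mathcal{R}}_{q_{j}, N-1}(0) \cdot A \vec{b}_{m}$ collapses to $\widetilde{\mathcal{R}}_{q_{j}, N-1}(0) \cdot \vec{b}_{m}$, which lies in $\mathbb{Z}$ by Lemma \ref{lem2} Item iv) and thus contributes a factor of $1$ after exponentiation. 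The remaining two terms are $\vec{c}_{\ell} \cdot A \widetilde{\Psi}_{q_{k}, N-1}(0)$ (depending on $\ell$ and $q_{k}$) and $\vec{c}_{\ell} \cdot A \vec{b}_{m}$; upon exponentiating, the latter gives precisely $[M_{1}]_{\ell m} = m_{\ell m}$.

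Collecting these factors yields
\[ [\widetilde{M}_{N}]_{jk} = m_{\ell m} \cdot [\widetilde{M}_{N-1}]_{q_{j} q_{k}} \cdot e^{-2\pi i \vec{c}_{\ell} \cdot A \widetilde{\Psi}_{q_{k}, N-1}(0)}, \]
and the last factor, depending only on $\ell$ and $q_{k}$, is exactly $[\widetilde{D}_{N, \ell}]_{q_{k} q_{k}}$. Because this factor depends on the column position $q_{k}$ within the block (rather than the row position), the diagonal matrix $\widetilde{D}_{N, \ell}$ multiplies $\widetilde{M}_{N-1}$ on the right, producing $\widetilde{B}_{\ell m} = m_{\ell m} \widetilde{M}_{N-1} \widetilde{D}_{N, \ell}$ as claimed.

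The main delicate point is bookkeeping rather than deep mathematics: unlike Theorem \ref{Th:main1}, where the diagonal factor depended on the column block index $m$ and the row position $q_{j}$ (placing $D_{N, m}$ on the left of $M_{N-1}$), here the diagonal factor depends on the row block index $\ell$ and the column position $q_{k}$, forcing $\widetilde{D}_{N, \ell}$ to appear on the right of $\widetilde{M}_{N-1}$. Keeping straight which of the four cross terms from the bilinear expansion yields the diagonal factor, which gives the integer-valued contribution, and which two reassemble into the recursive entry of $\widetilde{M}_{N-1}$ is the only real subtlety.
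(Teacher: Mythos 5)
Your proof is correct and follows essentially the same route as the paper: you subdivide $\widetilde{M}_{N}$ into $K^{N-1} \times K^{N-1}$ blocks, invoke Lemma \ref{lem3} Items ii) and iv), expand the bilinear form, discard the integer-valued cross term via Lemma \ref{lem2} Item iv), and identify the surviving factors with $m_{\ell m}$, $[\widetilde{M}_{N-1}]_{q_j q_k}$, and the diagonal entry $[\widetilde{D}_{N,\ell}]_{q_k q_k}$. Your explicit remark that $B\vec{u}\cdot A\vec{v} = \vec{u}\cdot\vec{v}$ (from $B = (A^{T})^{-1}$) makes transparent a step the paper leaves implicit, and your observation about why the diagonal factor's dependence on $q_{k}$ forces right-multiplication by $\widetilde{D}_{N,\ell}$ is a helpful clarification of the same computation.
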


\begin{proof}
The proof proceeds similarly to the proof Theorem \ref{Th:main1}.  Let us first subdivide $\widetilde{M}_{N}$ into $K^{N-1} \times K^{N-1}$ blocks $\widetilde{B}_{\ell m}$, so that 
\[ \widetilde{M}_{N} = \begin{pmatrix} \widetilde{B}_{00} & \dots  & \widetilde{B}_{0 (K-1)} \\ \vdots & \ddots & \vdots \\ \widetilde{B}_{(K-1) 0} & \dots & \widetilde{B}_{(K-1) (K-1)}
\end{pmatrix}. \]
Fix $0 \leq j,k < K^{N}$ and suppose $ \ell K^{N-1} \leq j < (\ell + 1)K^{N-1}$ and $m K^{N-1} \leq k < (m+1)K^{N-1}$ with $0 \leq \ell, m < K$.  Let $q_{j} = j - \ell K^{N-1}$ and $q_{k} = k - m K^{N-1}$.  Observe that 
\begin{equation} \label{Eq:matrix3}
[ \widetilde{M}_{N} ]_{jk} = [ \widetilde{B}_{\ell m} ]_{q_{j} q_{k}}. 
\end{equation}

We calculate using Lemma \ref{lem3} items ii) and iv):
\begin{align*}
\widetilde{\mathcal{R}}_{j,N}(0) \cdot \widetilde{\Psi}_{k,N}(0) = \ & 
 ( B \widetilde{\mathcal{R}}_{q_{j},N-1}(0) + \vec{c}_{\ell} ) \cdot (A \widetilde{\Psi}_{q_{k},N-1}(0) + A \vec{b}_{m} ) \\
= \ &  \widetilde{\mathcal{R}}_{q_{j},N-1}(0) \cdot \widetilde{\Psi}_{q_{k},N-1}(0) + \vec{c}_{\ell} \cdot A \widetilde{\Psi}_{q_{k},N-1}(0) \\
& \qquad + \widetilde{\mathcal{R}}_{q_{j}, N-1}(0) \cdot \vec{b}_{m} + \vec{c}_{\ell} \cdot  A \vec{b}_{m}. 
\end{align*}
By Lemma \ref{lem2} Item iv), $\widetilde{\mathcal{R}}_{q_{j}, N-1}(0) \cdot \vec{b}_{m} \in \mathbb{Z}$.  Thus, 
\[
[ \widetilde{B}_{\ell m} ]_{q_{j} q_{k}} = [M_{N-1}]_{q_{j} q_{k}} e^{- 2 \pi i \vec{c}_{\ell} \cdot A \widetilde{\Psi}_{q_{k},N-1}(0)} [M_{1}]_{\ell m}
\]
and as in the proof of Theorem \ref{Th:main1}, we have
\[ \widetilde{B}_{\ell m} = [M_{1}]_{\ell m} \widetilde{M}_{N-1} \widetilde{D}_{N,\ell}. \]
\end{proof}

\subsection{Computational Complexity of Theorems \ref{Th:main1} and \ref{Th:main2}}

As a consequence of Proposition \ref{Hocount}, the matrix $M_{N}$ can be multiplied by a vector of dimension $K^{N}$ in at most $K \mathcal{P}_{N-1} + 3 K^{N+1} - 2 K^{N}$ operations, where $\mathcal{P}_{N-1}$ is the number of operations required by the matrix multiplication for $M_{N-1}$.  Since $M_{N-1}$ has the same block form as $M_{N}$, $\mathcal{P}_{N-1}$ can be determined by $\mathcal{P}_{N-2}$, etc.  The proof of the following proposition is a standard induction argument, which we omit.  Note that this says that the computational complexity for $M_{N}$ is comparable to that for the FFT (recognizing the difference in the number of generators for the respective IFS's).

\begin{proposition}
The number of operations to calculate the matrix multiplication $M_{N} \vec{v}$ is $\mathcal{P}_{N} = K^{N-1} \mathcal{P}_{1} + 3(N-1) K^{N+1} - 2 (N-1) K^{N}$.  Consequently, $\mathcal{P}_{N} = O(N \cdot K^{N})$.
\end{proposition}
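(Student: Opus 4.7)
The plan is to establish the closed form by a straightforward induction on $N$, using the one-step recurrence
\[ \mathcal{P}_{N} = K \mathcal{P}_{N-1} + 3K^{N+1} - 2K^{N} \]
recorded in the paragraph immediately before the proposition. This recurrence is itself an immediate application of Proposition \ref{Hocount} to the block form established in Theorem \ref{Th:main1}: $M_N$ is a $K \times K$ block matrix whose inner blocks have dimension $M = K^{N-1}$, and each inner-block multiplication by a length-$K^{N-1}$ vector can be carried out via $M_{N-1}$ at cost $\mathcal{P}_{N-1}$. Substituting $M = K^{N-1}$ and $\mathcal{O}_M = \mathcal{P}_{N-1}$ into the bound $K \mathcal{O}_M + 3MK^2 - 2MK$ of Proposition \ref{Hocount} produces the recurrence exactly.

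For the inductive argument itself, the base case $N = 1$ is immediate, since $K^{0} \mathcal{P}_1 + 3 \cdot 0 \cdot K^2 - 2 \cdot 0 \cdot K = \mathcal{P}_1$. For the inductive step, I would assume
\[ \mathcal{P}_{N-1} = K^{N-2}\mathcal{P}_1 + 3(N-2)K^{N} - 2(N-2)K^{N-1}, \]
substitute into the recurrence, and collect like terms: multiplying through by $K$ gives $K^{N-1}\mathcal{P}_1 + 3(N-2)K^{N+1} - 2(N-2)K^{N}$, and adding the non-recursive contribution $3K^{N+1} - 2K^N$ promotes the coefficients from $(N-2)$ to $(N-1)$, yielding exactly the claimed formula. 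Equivalently, one may iterate the recurrence $N-1$ times and observe that each iteration contributes $3K^{N+1} - 2K^{N}$ once the appropriate power of $K$ has been absorbed, so the total contribution is $(N-1)\bigl(3K^{N+1} - 2K^N\bigr)$.

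For the asymptotic bound $\mathcal{P}_{N} = O(N \cdot K^N)$, I would note that $\mathcal{P}_1$ depends only on $K$ and not on $N$, so the leading term $K^{N-1}\mathcal{P}_1 = O(K^N)$; the remaining two terms are manifestly $O(N \cdot K^N)$, which together yield the claim.

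I anticipate no genuine obstacle: the authors themselves describe this as a standard induction and elect to omit it. The only point requiring attention is bookkeeping the index shift, ensuring that the factor $(N-2)$ carried by the inductive hypothesis becomes $(N-1)$ in both linear-in-$N$ terms after absorbing the non-recursive contribution $3K^{N+1} - 2K^{N}$. That one arithmetic identity is the entirety of the content of the step.
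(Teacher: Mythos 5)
Your proof is correct and is exactly the standard induction the authors say they omit: you correctly derive the one-step recurrence from Proposition \ref{Hocount} applied to the block form of Theorem \ref{Th:main1}, and the algebra in the inductive step (and the base case and asymptotics) all check out.
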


The significance of Theorem \ref{Th:main2} concerns the inverse of $M_{N}$.  If $P$ is the permutation matrix as in Lemma \ref{L:tilde}, then $M_{N}^{-1} = P \widetilde{M}_{N}^{-1} P$.  By Proposition \ref{Hoinv}, $\widetilde{M}_{N}^{-1}$ has the form of \dita's construction, and so the computational complexity of $\widetilde{M}_{N}^{-1}$ is the same as $M_{N}$.  Thus, modulo multiplication by the permutation matrices $P$, the computational complexity of multiplication by $M_{N}^{-1}$ is the same as that for $M_{N}$.


\subsection{The Diagonal Matrices}
The matrices $M_{N}$ and $\widetilde{M}_{N}$ have the form of \dita's construction as shown in Theorems \ref{Th:main1} and \ref{Th:main2}.  The block form of \dita's construction involves diagonal matrices, which in Equations (\ref{Eq:main1}) and (\ref{Eq:main2}) are determined by the IFS's used to generate the matrices $M_{N}$ and $\widetilde{M}_{N}$.  As such, the diagonal matrices satisfy certain recurrence relations.

\begin{theorem}
The diagonal matrices which appear in the block form of $M_{N}$ (Equation (\ref{Eq:main1})) satisfy the recurrence relation $D_{N,m} = D_{N-1,m}  \otimes E_{N,m}$, where $E_{N,m}$ is the $K \times K$ diagonal matrix with $[E_{N,m}]_{uu} = e^{- 2 \pi i c_u \cdot A^{N} \vec b_m}$.  That is: 
\[ [D_{N,m}]_{pp} =  [D_{N-1,m}]_{\widehat p \widehat p} \ e^{-2 \pi i (c_{p_0} \cdot A^{N} \vec b_m)}\]
where $\widehat p = (p - p_0)/K$.

Likewise, the diagonal matrices which appear in the block form of $\widetilde{M}_{N}$ (Equation (\ref{Eq:main2})) satisfy the recurrence relation $\widetilde D_{N,\ell} = \widetilde D_{N-1,\ell}  \otimes \widetilde E_{N, \ell}$, where $\widetilde E_{N,\ell}$ is the $K \times K$ diagonal matrix with $[\widetilde E_{N,\ell}]_{uu} = e^{- 2 \pi i \vec c_\ell \cdot A^N \vec b_u}$.  That is: 
\[ [\widetilde D_{N,\ell}]_{pp}= [D_{N-1,\ell}]_{\widehat p \widehat p} \  e^{- 2\pi i  \vec c_\ell \cdot A^{N} \vec b_{p_0}}. \]
\end{theorem}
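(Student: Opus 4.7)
The plan is to expand each diagonal entry and perform a recursion that peels off the \emph{innermost} composition (the digit $p_0$) rather than the outermost, since it is $p_0$ that indexes the $K \times K$ factor $E_{N,m}$ or $\widetilde E_{N,\ell}$. This is the opposite of the splitting used in Lemma \ref{lem3}, so the first step is to rederive the two innermost-peeling recursions for $\mathcal{R}_{p,N-1}(0)$ and $\widetilde\Psi_{p,N-1}(0)$. Writing $p = p_0 + K\hat p$, the obverse definition gives directly
\[ \mathcal{R}_{p,N-1}(0) = \rho_{p_0}\bigl(\mathcal{R}_{\hat p,N-2}(0)\bigr) = B\,\mathcal{R}_{\hat p,N-2}(0) + \vec c_{p_0}, \]
while the reverse definition, combined with Lemma \ref{lem1} Item ii) applied to $\psi_{p_0}(0) = A\vec b_{p_0}$, yields
\[ \widetilde\Psi_{p,N-1}(0) = \widetilde\Psi_{\hat p,N-2}\bigl(\psi_{p_0}(0)\bigr) = \widetilde\Psi_{\hat p,N-2}(0) + A^{N-1}\vec b_{p_0}. \]

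Next I would substitute these into the respective exponents and use $B^T = A^{-1}$, which is the algebraic heart of the argument. On the obverse side, $\mathcal{R}_{p,N-1}(0) \cdot A^N \vec b_m$ splits into $\mathcal{R}_{\hat p,N-2}(0) \cdot A^{N-1} \vec b_m$ (since $B^T A^N = A^{N-1}$) plus $\vec c_{p_0} \cdot A^N \vec b_m$, and these are precisely the exponents defining $[D_{N-1,m}]_{\hat p \hat p}$ and $[E_{N,m}]_{p_0 p_0}$, respectively. The reverse case is the mirror image: applying $A$ to the recursion and dotting with $\vec c_\ell$ produces $\vec c_\ell \cdot A\,\widetilde\Psi_{\hat p,N-2}(0) + \vec c_\ell \cdot A^N \vec b_{p_0}$, which are the exponents of $[\widetilde D_{N-1,\ell}]_{\hat p \hat p}$ and $[\widetilde E_{N,\ell}]_{p_0 p_0}$. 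Since each factor matrix is diagonal, the scalar factorization $[D_{N,m}]_{pp} = [D_{N-1,m}]_{\hat p\hat p}\,[E_{N,m}]_{p_0 p_0}$ is exactly the $(p,p)$ entry of the Kronecker product $D_{N-1,m} \otimes E_{N,m}$ under the indexing $p = p_0 + K\hat p$, giving the claimed identity.

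The hard part is purely bookkeeping rather than analytic: Lemma \ref{lem3} peels off the outermost generator (index $j_{N-1}$), but the diagonal entries naturally recurse on the innermost (index $p_0$), so I must verify that the reduced tail $\rho_{p_1} \circ \cdots \circ \rho_{p_{N-2}}$ is legitimately $\mathcal{R}_{\hat p,N-2}$ in the obverse sense, and analogously that $\psi_{p_{N-2}} \circ \cdots \circ \psi_{p_1}$ is $\widetilde\Psi_{\hat p,N-2}$ in the reverse sense. Both are immediate from how $\hat p$ reindexes the digits, but easy to flip around if one is careless. Once this reindexing is fixed, the collapse $B^T A = I$ folds the cross term into exactly the required recursive object and no further work is needed.
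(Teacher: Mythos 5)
Your proposal is correct and follows essentially the same route as the paper's proof: peel the innermost generator $\rho_{p_0}$ (respectively $\psi_{p_0}$) off $\mathcal{R}_{p,N-1}(0)$ (resp.\ $\widetilde\Psi_{p,N-1}(0)$), use $B^T = A^{-1}$ to collapse the transpose against the power of $A$, and read off the two factors as the $(\widehat p,\widehat p)$ entry of the smaller diagonal matrix times the $(p_0,p_0)$ entry of the $K\times K$ factor. Your explicit caution that Lemma~\ref{lem3} peels the \emph{outermost} digit and so cannot be invoked directly is exactly the point the paper's proof handles by recomputing from scratch, and your reindexing of $\rho_{p_1}\circ\cdots\circ\rho_{p_{N-2}}$ as $\mathcal{R}_{\widehat p,N-2}$ matches the paper's identification.
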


\begin{proof}
As demonstrated in Theorem \ref{Th:main1}, for $p = 0,1,\dots,K^{N-1}$, $[D_{N,m}]_{pp}= e^{- 2\pi i \mathcal{R}_{p,N-1}(0) \cdot A^N \vec b_m}$.  Note that $p_{N-1} = 0$, and $\rho_{0}(0) = 0$.  We want to cancel one power of $A$ in $A^N \vec b_m$, so we factor out a $B$ from $\mathcal{R}_{p,N-1}(0)$:
\[ \mathcal{R}_{p,N-1}(0) = \rho_{p_0} \circ \rho_{p_1} \circ \dots \circ \rho_{p_{N-2}} (0) = B \left(\rho_{p_1} \circ \dots \circ \rho_{p_{N-2}} (0)\right) + \vec c_{p_0}. \] 
Since $\widehat p =p_1 +p_2 K + \dots + p_{N-2} K^{N-3}$, $\mathcal{R}_{p,N-1}(0) =  B R_{\widehat p,N-2}(0)+ \vec c_{p_0}$.  Thus,
\begin{align*} 
[D_{N,m}]_{pp} &= e^{- 2\pi i \mathcal{R}_{p,N-1}(0) \cdot A^N \vec b_m} \\
&=e^{- 2\pi i (B \mathcal{R}_{\widehat p,N-2}(0) \cdot A(A^{N-1}\vec b_m))} e^{-2 \pi i (\vec c_{p_0} \cdot A^{N} \vec b_m)}\\
&=e^{- 2\pi i (\mathcal{R}_{\widehat p,N-2}(0) \cdot(A^{N-1}\vec b_m))} e^{-2 \pi i (\vec c_{p_0} \cdot A^{N} \vec b_m)}\\
&= [D_{N-1,m}]_{\widehat p \widehat p} \ e^{-2 \pi i (\vec c_{p_0} \cdot A^{N} \vec b_m)}. 
\end{align*}

Similarly, as demonstrated in Theorem \ref{Th:main2}, $[\widetilde D_{N,\ell}]_{pp}= e^{- 2\pi i \vec  c_\ell \cdot A (\widetilde\Psi_{p,N-1} (0))}$.  We write: 
\begin{align*}
\widetilde\Psi_{p,{N-1}}(0) &=  \psi_{p_{N-2}} \circ \psi_{p_{N-3}} \circ \dots \circ \psi_{p_1} \circ \psi_{p_0}(0) \\
&= \psi_{p_{N-2}} \circ \psi_{p_{N-3}} \circ \dots \circ \psi_{p_1}(0 + A \vec b_{p_0}) \\
&= \widetilde \Psi_{\widehat p, N-2} (0 + A \vec b_{p_0}) \\
&= \widetilde\Psi_{\widehat p, N-2} (0) + A^{N-1}\vec b_{p_0}.
\end{align*}
where in the last equality we use Lemma \ref{lem1} item ii).  Therefore: 
\begin{align*}
[\widetilde D_{N,\ell}]_{pp} &= e^{- 2\pi i  c_\ell \cdot A (\widetilde\Psi_{p,N-1} (0))}\\
&= e^{- 2\pi i  \vec c_\ell \cdot A \left( \widetilde\Psi_{\widehat p, N-2} (0) + A^{N-1} \vec b_{p_0}\right) } \\
&= e^{- 2\pi i  \vec c_\ell \cdot A  \left(\widetilde\Psi_{\widehat p, N-2} (0) + A^{N}\vec b_{p_0}\right) }\\
&= e^{- 2\pi i  \vec c_\ell \cdot A ( \widetilde\Psi_{\widehat p, N-2} (0))} e^{- 2\pi i  \vec c_\ell \cdot A^{N} \vec b_{p_0}}\\
&= [\widetilde D_{N-1,\ell}]_{\widehat p \widehat p} \ e^{- 2\pi i  \vec c_\ell \cdot A^{N} \vec b_{p_0} }.
\end{align*}

\end{proof}

\bibliographystyle{amsplain}
\bibliography{ffft}
\nocite{*}

\end{document}